\newtheorem{theorem}{Theorem}[section]
\begin{document}
\begin{frontmatter}
\title{A new method to find exact solution of nonlinear ordinary differential equations: Application to derive thermophoretic waves in graphene sheets   
}
\author{Prakash Kumar Das}
\ead{prakashdas.das1@gmail.com}


\address{Department of Mathematics, TDB College, Raniganj 713347, West Bengal, India}

\begin{abstract}
This article proposes a novel approach for determining exact solutions to nonlinear ordinary differential equations. We integrate a linearization approach for nonlinear equations with the variation of parameters method and Adomian polynomials to obtain exact solutions for higher-order nonlinear boundary and initial value problems. The suggested approach comprises three part. In the initial segment of this approach, a linearization strategy is suggested to transform higher-order nonlinear ordinary differential equations into an infinite system of linear nonhomogeneous ordinary differentials utilizing the Adomian polynomial. In the second phase of this approach, these equations are resolved using the variation of parameters method. In the final segment of this approach, we merge the outcomes from earlier steps to derive a generalized iterative method for addressing the given nonlinear equation. The recommended iterative method provides the solution via a rapidly converging series that readily approaches a closed form solution. The proposed approach is very efficient and essentially perfect for determining exact solutions of nonlinear equations. To demonstrate the effectiveness of this method, we examined the extended (2 + 1) dimensional equation for thermophoretic motion, which is based on wrinkle wave movements in graphene sheets supported by a substrate. The implementation of the suggested approach effectively yielded  closed-form solutions in terms of  exponential functions, hyperbolic functions, trigonometric functions, algebraic functions, and Jacobi elliptic functions, respectively. Three generated solutions illustrated to examine the characteristics of thermophoretic waves in graphene sheets. The proposed method's benefits and drawbacks are also examined. Consequently, unlike previous solutions obtained via the variation of parameters method for nonlinear issues, the solutions presented here are exact and unique. \\
\it{ \textbf{Keywords:} Method of Variation of Parameters, boundary value problems, Adomian polynomials, nonlinear equations, exact solutions.}
\end{abstract}
\end{frontmatter}
\section{Introduction}
The method of variation of parameters (MVP) was developed independently by Leonhard Euler in 1748 and Joseph Louis Lagrange in 1774. The MVP  is a method used to convert solutions of linear homogeneous ordinary differential equations into a specific integral for the related inhomogeneous system. Furthermore, the approach can be considered a highly ingenious enhancement of the order reduction technique used for addressing nonhomogeneous linear differential equations. Variation of parameters for local fractional nonhomogenous linear-differential equations in \citep{horani2016variation} to find articular solution.
Recently, the MVP has proven effective in deriving approximate solutions of nonlinear boundary value problems. This approach was employed for the first time to a micropolar flow problem for deriving its  numerically approximate solutions \citep{gungor2020application}.
The MVP is utilized to iteratively derive numerical solutions for combined conduction and radiation in a one-dimensional, planar, absorbing, emitting, non-gray medium with non-gray opaque boundaries \citep{moore2014application}.
In \citep{noor2008variation,mohyud2009variation}, the authors utilized method of variation of parameter  to derive numerical solutions of nonlinear sixth-order boundary value problems. 

In a recent study \citep{moore2019comparison}, numerical outcomes derived from the MVP are contrasted with those from semi-analytical techniques, specifically the Adomian decomposition method and the differential transformation method, in addressing nonlinear boundary value problems found in engineering. It has been observed that the MVP is considerably more effective than both semi-analytical and conventional numerical approaches,  while achieving similar accuracy. In contrast to semi-analytical techniques, the effectiveness of MVP does not depend on the level of nonlinearity. The MVP is demonstrated to be an appealing substitute for semi-analytical techniques and conventional numerical methods in addressing boundary value problems found in engineering applications where solution efficiency is crucial.

Till now it is believed by the authors that the definite integral involved in the MVP is physically not  usable, is of no practical value or importance while solving nonlinear differential equations. Hence they used  definite integrals which can  accurately approximated for specific values of independent variable using any decent numerical integration method. As a result, it  may not be able to give a nice formula for particular integral, it can still evaluated  at desired points on any reasonable interval of interest, and possibly using these values to generate a table for particular integral or to sketch its graph. For this reason in all of the above cited papers authored have used definite integrals of MVP in solving nonlinear problems, which leads them to approximate solutions.  In construct to the above, here for the first time we are going to use indefinite integral in search of closed form exact solution of nonlinear differential equations. 

From the preceding discussion, it is evident that the  MVP with definite integration has thus far been utilized on nonlinear equations to derive numerical and approximate solutions. Motivated by its precision and achievements in surpassing other techniques, we are compelled to inquire if this method can be adjusted and utilized to obtain exact closed-form solutions of nonlinear differential equations as well? This study seeks to deliver the answer to that inquiry. Therefore, we suggest a modified method of variation of parameters (MMVP) utilizing indefinite integrals to obtain exact solutions of nonlinear ordinary differential equations.
 
The section \ref{MVP} covers the specific procedures for addressing higher order nonlinear differential equations using MMVP. The subsection \ref{red} demonstrates the method for reducing higher order nonlinear differential equations to a system of linear equations. Subsection \label{sMVP} addresses the specific steps of MMVP for addressing the linear system of differential equations presented in previous sections. The final subsection \ref{sSystem} of section \ref{MVP} demonstrates how we can combine the findings from earlier sections to derive a conclusive recursive formula for addressing the specified higher order nonlinear differential equation. Additionally, how to obtain closed-form solutions for the considered equations discussed here. Section \ref{Appl} addresses the use of MMVP to tackle the extended (2 + 1) dimensional thermophoretic motion equation, which emerges from wrinkle wave actions in graphene sheets supported by substrates utilizing the novel approach. We have successfully derived a closed-form solutions involving exponential, hypergeometric, trigonometric, algebraic, and Jacobi elliptic functions for the given equation through MMVP. Section \ref{AdLim} discusses the benefits and drawbacks of the proposed method.  A discussion of our findings concludes Section \ref{con}.

\section{MMVP for higher order nonlinear differential equations}\label{MVP}
\subsection{Reduction of higher order nonlinear differential equations to system of linear equations}\label{red}
Consider the following n-th order nonlinear differential equation:
\begin {eqnarray}\label{eq1p1}
P_0(x)\ y^{(n)}(x) + P_1(x)\ y^{(n-1)}(x) +\cdots+ P_n(x)\ y(x) = {\cal N}[y](x), \, \, \, \, x \in \mathbb{R}.
\end{eqnarray}
 In this case ${\cal N}[y](x)$ is a set of nonlinear terms, where $m$ and $n$ are integers and $P_0(x), P_1(x)\ \cdots, P_n(x)$ are continuous functions of $x$.  
Equation Eq. (\ref{eq1p1}) will be shortened as 
\begin {eqnarray}\label{eq1p2a}
\hat{{\cal L}}[y](x)=  {\cal N}[y](x),
\end{eqnarray}
where
\begin {eqnarray}\label{eq1p2}
\hat{{\cal L}}[y](x)= P_0(x)\ y^{(n)}(x) + P_1(x)\ y^{(n-1)}(x) +\cdots+ P_n(x)\ y(x). 
\end{eqnarray}
To convert Eq. (\ref{eq2p1}) into a linear equation system, we write
\begin{equation}\label{eq1p3}
y(x) = \sum_{k=0}^\infty \epsilon^{\frac{k}{2}} \, y_{k}(x)
\end{equation}  
and reformulate the nonlinear term into the series 
\begin{equation}\label{eq1p4}
{\cal N}[y](x) = \epsilon^{p} \, \sum_{k=0}^\infty \epsilon^{\frac{k}{2}} {\cal A}_k(y_{0}(x),y_{1}(x),\cdots ,y_{k}(x))\equiv \epsilon^{p} \, \sum_{k=0}^\infty \epsilon^{\frac{k}{2}} \, {\cal A}_k(x),
\end{equation}
where ${\cal A}_k(x) \ (\ k\geq 0) $ are referred to as Adomian polynomials \citep{adomian2013solving,das2018solutions}. These polynomials can be derived methodically by applying the formula
\begin {equation}\label{eq1p5}
{\cal A}_k(x)= \frac{1}{k!}\left[\frac{d^k}{d \bar{\epsilon}^k}{\cal N} \left( \sum_{l=0}^\infty y_l(x) \ \bar{\epsilon}^l \right)\right]_{\bar{\epsilon}=0}, \ \ k \geq 0.
\end{equation}
It should be noted that the symbols $\epsilon$ and $\bar{\epsilon}$ found in (\ref{eq1p3})-(\ref{eq1p5}) do not represent any physical perturbation parameter. Rather, these serve as the placeholders for nonlinear terms during subsequent iterations.
Utilizing (\ref{eq1p3}) and (\ref{eq1p4}) in (\ref{eq1p2a}), we obtain
\begin {eqnarray}\label{eq1p6}
 \sum_{k=0}^\infty \epsilon^{\frac{k}{2}} \, \hat{\cal L}\left[y_{k}(x) \right] =\epsilon^{p} \, \sum_{k=0}^\infty \epsilon^{\frac{k}{2}} \, {\cal A}_k(x).
\end{eqnarray}
It is noteworthy that gathering the coefficients of $\epsilon$ after substituting $p = \frac{1}{2}$ in (\ref{eq1p6}) results in a system of linear equations 
\begin{equation}\label{eq1p7}
  \hat{\cal L}\left[y_{0}(x) \right]=0, \ \ 
  \hat{\cal L}\left[y_{k}(x) \right] = {\cal A}_{k-1}(x),\ \ k \geq 1
\end{equation}
involving dependent variables $y_{k}(\xi)$ and non-homogeneous terms ${\cal A}_{k-1}(x)$.
Conversely, selecting $p=1$ in (\ref{eq1p6}) provides the linear system
\begin{equation}\label{eq1p8}
  \hat{\cal L}\left[y_{0}(x) \right]=0, \ \
  \hat{\cal L}\left[y_{1}(x) \right]=0, \ \
   \hat{\cal L}\left[y_{k}(x) \right] = {\cal A}_{k-2}(x),\ \ k \geq 2.
\end{equation}
In the next section, we will now utilize the modfied method of variation of parameters for the system of equations (\ref{eq1p7}) and (\ref{eq1p8}).
\subsection{Solution of system of linear differential equations (\ref{eq1p7}) and (\ref{eq1p8}) by MMVP}\label{sMVP}
In this part, we address the nonhomogeneous equation
\begin {eqnarray}\label{eq2p1}
\hat{\cal L}\left[y_{k}(x) \right] = F_k(x)
\end{eqnarray}
and  use $F_k(x)={\cal A}_{k-1}(x)$ when addressing system (\ref{eq1p7}) and $F_k(x)={\cal A}_{k-2}(x)$ for system (\ref{eq1p8}). 
It is crucial to mention that all solutions to this equation and its complementary equation $\hat{\cal L}\left[y_{k}(x) \right] = 0$ are taken into account in $ \mathbb{R}$. Now the complementary solution of linear part of all equations i.e., $\hat{\cal L}\left[y_{k}(x) \right] =  0,\ i=0, 1 ,2, \cdots $ has the following form
\begin {eqnarray}\label{eq2p2}
y_{k}^{c}(x) = u_1 \  \tilde{y}_{1}(x) + u_2 \  \tilde{y}_{2}(x) +\cdots+ u_n \  \tilde{y}_{n}(x),
\end{eqnarray}
where $u_i,\ i=1,2,\cdots n$ arbitrary constants and  set of solutions $\{  \tilde{y}_{1}(x),  \tilde{y}_{2}(x), \cdots ,  \tilde{y}_{n}(x)\}$ are known as  fundamental set of solutions. 
We will demonstrate how to apply the MMVP to determine a specific solution of (\ref{eq2p1}), when we know a fundamental set of solutions $\{  \tilde{y}_{1}(x),  \tilde{y}_{2}(x), \cdots ,  \tilde{y}_{n}(x)\}$.
We are looking for a particular solution of (\ref{eq2p1}) in the format of 
\begin {eqnarray}\label{eq2p3}
 y_{k}^{p}(x) = u_1(x) \   \tilde{y}_{1}(x) + u_2(x)\   \tilde{y}_{2}(x) +\cdots+ u_n(x)\   \tilde{y}_{n}(x)
\end{eqnarray}
where $\{ u_1(x), u_2(x), \cdots , u_n(x)\}$ are functions yet to be identified. We start by applying the subsequent $(n-1)$ constraints on $\{ u_1(x), u_2(x), \cdots, u_n(x)\}$: 
\begin {eqnarray}\label{eq2p5}
\begin{cases}
u_1'(x) \   \tilde{y}_{1}(x) + u_2'(x) \   \tilde{y}_{2}(x) +\cdots+ u_n'(x) \   \tilde{y}_{n}(x)= 0,\\
u_1'(x) \   \tilde{y}_{1}'(x) + u_2'(x) \   \tilde{y}_{2}'(x) +\cdots+ u_n'(x) \   \tilde{y}_{n}'(x)= 0, \\
      \hspace{1in} \vdots  \\
u_1'(x) \   \tilde{y}_{1}^{(n-2)}(x) + u_2'(x) \   \tilde{y}_{2}^{(n-2)}(x) +\cdots+ u_n'(x) \   \tilde{y}_{n}^{(n-2)}(x)= 0. 
\end{cases}
\end{eqnarray}
These circumstances result in straightforward equations for the higher derivatives of $y_k(x)$:
\begin {eqnarray}\label{eq2p6}
y_k^{(r)}(x) = u_1(x) \   \tilde{y}_{1}^{(r)}(x) + u_2(x)\   \tilde{y}_{2}^{(r)}(x) +\cdots+ u_n(x)\   \tilde{y}_{n}^{(r)}(x), \  \ 0 \leq r \leq (n-1).
\end{eqnarray}
The final equation in Equation (\ref{eq2p6}) is 
\begin {eqnarray}\label{eq2p6a}
y_k^{(n-1)}(x) = u_1(x) \   \tilde{y}_{1}^{(n-1)}(x) + u_2(x)\   \tilde{y}_{2}^{(n-1)}(x) +\cdots+ u_n(x)\   \tilde{y}_{n}^{(n-1)}(x).
\end{eqnarray}
Taking the derivative of this results in 
\begin {eqnarray}\label{eq2p7}
y_k^{(n)}(x) = u_1(x) \   \tilde{y}_{1}^{(n)}(x) + u_2(x)\   \tilde{y}_{2}^{(n)}(x) +\cdots+ u_n(x)\   \tilde{y}_{n}^{(n)}(x)+
u_1'(x)\   \tilde{y}_{1}^{(n-1)}(x) + u_2'(x)\   \tilde{y}_{2}^{(n-1)}(x) +\cdots+ u_n'(x)\   \tilde{y}_{n}^{(n-1)}(x).
\end{eqnarray}
Inserting this and equation (\ref{eq2p6}) into equation (\ref{eq2p1}) produces
\begin {eqnarray}\label{eq2p8}
 && u_1(x) \ \hat{\cal L}\left[  \tilde{y}_{1}\right](x) + u_2(x)\ \hat{\cal L}\left[  \tilde{y}_{2}\right](x) +\cdots+ u_n(x)\ \hat{\cal L}\left[  \tilde{y}_{n}\right](x) \nonumber \\
 && +  P_0(x)\ \left( u_1'(x) \   \tilde{y}_{1}^{(n-1)}(x) + u_2'(x)\   \tilde{y}_{2}^{(n-1)}(x) +\cdots+ u_n'(x)\   \tilde{y}_{n}^{(n-1)}(x) \right) = F_k(x).
\end{eqnarray}
Given that $\hat{\cal L}\left[  \tilde{y}_{i}\right](x) = 0, \ (1\leq i \leq n)$, this simplifies to
\begin {eqnarray}\label{eq2p9}
  \left( u_1'(x) \  \tilde{y}_1^{(n-1)}(x) + u_2'(x)\  \tilde{y}_2^{(n-1)}(x) +\cdots+ u_n'(x)\  \tilde{y}_n^{(n-1)}(x) \right) =\frac{ F_k(x)}{P_0(x)}.
\end{eqnarray}
 It indicates that (\ref{eq2p3}) is a solution to equation (\ref{eq2p1}) if conditions (\ref{eq2p5}) and (\ref{eq2p9}) are satisfied. It is essential to mention that the combination of Equations (\ref{eq2p5}) and (\ref{eq2p9}) can be expressed in matrix form as
\begin {eqnarray}\label{eq2p10}
 \left[\begin{array}{cccc}
   \tilde{y}_{1}(x)&   \tilde{y}_{2}(x)&\cdots &   \tilde{y}_{n}(x) \\   \tilde{y}_{1}'(x)&   \tilde{y}_{2}'(x)&\cdots &   \tilde{y}_{n}'(x)   \\ \vdots & \vdots &\ddots & \vdots \\   \tilde{y}_{1}^{(n-2)}(x)&   \tilde{y}_{2}^{(n-2)}(x)&\cdots &   \tilde{y}_{n}^{(n-2)}(x)  \\    \tilde{y}_{1}^{(n-1)}(x)&   \tilde{y}_{2}^{(n-1)}(x)&\cdots &   \tilde{y}_{n}^{(n-1)}(x)  \\\end{array}\right]\left[\begin{array}{c} u_1'(x)\\ u_2'(x)\\ \vdots \\ u_{n-1}'(x)  \\ u_n'(x) \\ \end{array}\right]=\left[\begin{array}{c}
 0\\ 0\\ \vdots \\ 0  \\ F_k(x) /P_0(x) \\ \end{array}\right].
 \end{eqnarray}
 Applying Cramer’s rule to solve the system of equations results in 
\begin {eqnarray}\label{eq2p11}
u_{j}'(x) = (-1)^{n-j} \frac{F_k(x)\ W_j(x)}{P_0(x)\ W(x)}, 1 \leq j \leq n,  
\end{eqnarray}
where $W(x)$ represents the Wronskian of the fundamental solution set $\{   \tilde{y}_{1}(x),   \tilde{y}_{2}(x), \cdots ,   \tilde{y}_{n}(x) \}$, which is non-zero on $ \mathbb{R}$ and  $W_j(x)$ is the determinant derived by removing the last row and the $j-$th column from $W(x)$.

Once we have acquired $ u_1'(x), u_2'(x), \cdots , u_n'(x)$, we can perform integration to derive $ u_1(x), u_2(x), \cdots , u_n(x)$. We consider the integration constants as zero and eliminate any linear combination of $\{ \tilde{y}_{1}(x),   \tilde{y}_{2}(x), \cdots ,   \tilde{y}_{n}(x) \}$ that could be present in $y_k(x)$. This provides the solution to equation (\ref{eq2p1}) in the format 
\begin {eqnarray}\label{eq2p12}
y_k(x) = \sum_{j=1}^{n}(-1)^{n-j}    \tilde{y}_{j}(x) \int \frac{F_k(x) \ W_j(x)}{P_0(x)\ W(x)} dx.   
\end{eqnarray}
\subsection{Solution of system of equations (\ref{eq1p7}) and (\ref{eq1p8})}\label{sSystem}
To address the system (\ref{eq1p7}) through MMVP, we need to solve the recursive system
\begin {eqnarray}\label{eq2p13}
\begin{cases}
y_0(x) = \sum_{j=1}^{n} c_{0,j} \  \tilde{y}_{j}(x), \nonumber \\
y_k(x) = \sum_{j=1}^{n}(-1)^{n-j}    \tilde{y}_{j}(x) \int \frac{{\cal A}_{k-1}(x) \ W_j(x)}{P_0(x)\ W(x)} dx,   \ \ k \geq 1,
\end{cases}
\end{eqnarray}
where $c_{0,j}, \ j=1,2, \cdots n,$ are chosen constants and $\tilde{y}_{j}(x)$ are the solutions to $\hat{\cal L}\left[y_{k}(x) \right] = 0.$ This results in the series solution of equation (\ref{eq1p1}) expressed in the form
\begin{equation}\label{eq2p14}
y(x) = \sum_{k=0}^\infty  y_{k}(x)
\end{equation} 
In the same way, system (\ref{eq1p8}) can yield a series solution in the format of (\ref{eq2p14}) by utilizing a recursive system
\begin {eqnarray}\label{eq2p15}
\begin{cases}
 y_0(x) = \sum_{j=1}^{n} c_{0,j} \  \tilde{y}_{j}(x), \\
y_1(x) = \sum_{j=1}^{n} c_{1,j} \  \tilde{y}_{j}(x), \\ 
y_k(x) = \sum_{j=1}^{n}(-1)^{n-j}    \tilde{y}_{j}(x) \int \frac{{\cal A}_{k-2}(x) \ W_j(x)}{P_0(x)\ W(x)} dx, \ k \geq 2, 
\end{cases}
\end{eqnarray}
where $c_{l,j}, \ l=0,1$ are arbitrary constants and $ \tilde{y}_{j}(x)$ are solutions to $\hat{\cal L}\left[y_{k}(x)\right] = 0$ for  $j=0,1,2,\cdots,n.$

It is essential to point out that we have set the integration constants to zero during the integration of \( u_j'(x), \ j=1, 2, \cdots, n \), in (\ref{eq2p12}). If we retain them, they will be absorbed into the terms of \( y_0(x) \) of the series solution (\ref{eq2p14}) without yielding any significantly new results. It is essential to recognize that utilizing symbolic computations often enables us to accurately determine the general term or generating functions of the iterative scheme (\ref{eq2p13}) or (\ref{eq2p15}). Whenever we can determine the general term or generating functions, the series sum (\ref{eq2p14}) consistently provides a closed-form solution to the nonlinear differential equation (\ref{eq1p1}).
\section{Application of MMVP}\label{Appl}
\textbf{\textit{Example 1:}} Consider the following extended to (2 + 1) dimensional  thermophoretic motion equation, which was derived from wrinkle wave motions in substrate-supported graphene sheets\citep{abdel2021multi}
\begin{eqnarray}\label{ex2eq1}
u_{tx}(x,y,t)+(u_x(x,y,t))^2 +u(x,y,t)\ u_{xx}(x,y,t)+u_{xxxx}(x,y,t)+\left(\alpha(t) +b\right) u_{xx}(x,y,t)+\delta(t)\ u_{yy}(x,y,t)=0, 
\end{eqnarray}
where $u(x,y,t)$ is thermophoretic moving variable respect to longitudinal
displacement $x$, the lateral displacement $y$ and time $t$, $\alpha(t)$ and $b$ are the thermal conductivity coefficients, and $\delta(t)$ is the coefficient of the
lateral dispersion. It can be considered as the driving force for the accelerated movement of wrinkles from the thermal slope.
Under the following transformation 
\begin{eqnarray}\label{ex2eq2}
u(x,y,t)=U(\xi), \ \ \xi= -\frac{1}{\sqrt{a_1}} \int \left\{a_1 \left( \alpha (t) + b\right)+c_2^2 \ \delta (t)-a_2\right\} \, dt+\sqrt{a_1} \ x+c_2 \ y-c_1,
\end{eqnarray}
 equation (\ref{ex2eq1}) reduces to the following ordinary differential equation
\begin {eqnarray}\label{ex2eq3}
a_1^2 \ U^{(4)}(\xi )+ a_2 \ U^{(2)}(\xi ) =- a_1 \left\{ \left(U'(\xi )\right)^2+   U(\xi )\ U^{(2)}(\xi )\right\}.
\end{eqnarray}
If we compair it with equation (\ref{eq1p1}), we get $n=4,\ P_0(\xi)= a_1^2,\ P_1(\xi)= 0, \ P_2(\xi)= a_2, \ P_{3,4}(\xi)= 0$ and ${\cal N}[U](\xi)=- a_1 \left\{ \left(U'(\xi )\right)^2+   U(\xi )\ U^{(2)}(\xi )\right\}$.
Next following the section \ref{red}, we can reduce the equation (\ref{ex2eq3}) to the linear system of equation  (\ref{eq1p7}). Here the fundamental set of solutions of the linear part $\hat{\cal L}\left[U(\xi) \right]  \simeq a_1^2 \ U^{(4)}(\xi )+ a_2 \ U^{(2)}(\xi )= 0$ is given by 
 $\left\{ \tilde{U}_1(\xi)= e^{-\frac{\sqrt{-a_2} \xi }{a_1}},\ \tilde{U}_2(\xi)=e^{\frac{\sqrt{-a_2} \xi }{a_1}}, \ \tilde{U}_3(\xi)=\xi ,\ \tilde{U}_4(\xi)=1\right\}$ 
  This system of equations can be solved by MMVP using the iterative scheme given by (\ref{eq2p13}) in the form
 \begin {eqnarray}\label{ex2eq4}
 \begin{cases}
  U_0(\xi)=c_{0,1}\  e^{-\frac{\sqrt{-a_2} \xi }{a_1}}+c_{0,2}\  e^{\frac{\sqrt{-a_2} \xi }{a_1}}+c_{0,3}\ \xi +c_{0,4} \left(\equiv \sum_{j=1}^{n} c_{0,j} \  \tilde{U}_{j}(\xi)\right),  \\
 U_k(\xi) = \sum_{j=1}^{n}(-1)^{n-j}    \tilde{U}_{j}(\xi) \int \frac{{\cal A}_{k-1}(\xi) \ W_j(\xi)}{P_0(\xi)\ W(\xi)} d\xi,   \ \ k \geq 1,
\end{cases}
\end{eqnarray}
where ${\cal A}_{k-1}(\xi)$ are Adomian polynomials of nonlinear term  ${\cal N}[U](\xi)$ calculated using the formula (\ref{eq1p5}),  $W(\xi)$ represents the Wronskian of the fundamental solution set $\left\{ \tilde{U}_1(\xi),\ \tilde{U}_2(\xi), \ \tilde{U}_3(\xi),\ \tilde{U}_4(\xi)\right\}$,  and  $W_j(\xi)$ is the determinant derived by removing the last row and the $j-$th column from $W(\xi)$.\\ 
\textbf{Case-1.\  For Boundary Condition $U(\infty)= 0.$ }\\
Now applying the boundary condition $U(\xi)\rightarrow 0$ as $\xi \rightarrow \infty $ for localized solution on recursive scheme (\ref{ex2eq4}), we get $c_{0,i}=0,\ i=2,3,4$ which gives $U_0(\xi)=c_{0,1}\  e^{-\frac{\sqrt{-a_2} \xi }{a_1}}$ when $a_2<0$ and $a_1>0$. With this updated $U_0(\xi)$ iterative scheme (\ref{ex2eq4}) gives us the following correction terms
\begin {eqnarray}\label{ex2eq5}
&& U_0(\xi)=c_{0,1}\  e^{-\frac{\sqrt{-a_2} \xi }{a_1}},\ U_1(\xi)=\frac{a_1\  c_{0,1}^2 e^{-\frac{2 \sqrt{-a_2} \xi }{a_1}}}{6\ a_2},\ U_2(\xi)=\frac{a_1^2\ c_{0,1}^3
   e^{-\frac{3 \sqrt{-a_2} \xi }{a_1}}}{48\ a_2^2},\\ 
   && U_3(\xi)=\frac{a_1^3 \ c_{0,1}^4 \ e^{-\frac{4 \sqrt{-a_2} \xi }{a_1}}}{432\ a_2^3}, \ U_4(\xi)=\frac{5\ a_1^4 \ c_{0,1}^5
   e^{-\frac{5 \ \sqrt{-a_2} \xi }{a_1}}}{20736\  a_2^4},\ U_5(\xi)=\frac{a_1^5\ c_{0,1}^6 e^{-\frac{6 \sqrt{-a_2} \xi }{a_1}}}{41472\ a_2^5}, \cdots 
\end{eqnarray}
Now using the symbolic computation, one can get the general term of the above iterative terms in the form
\begin {eqnarray}\label{ex2eq6}
&& U_n(\xi)=\frac{(n+1)\ a_2  }{12^{n} \ a_1}\left(\frac{a_1 \ c_{0,1}}{a_2}  e^{-\frac{\sqrt{-a_2} }{a_1}\xi }\right)^{n+1}, \ n\geq 0. 
\end{eqnarray}
Hence the sum  of the above iterative terms in closed form is  given by the generating function
\begin{equation}\label{ex2eq7}
U(\xi) = \sum_{n=0}^\infty  U_{n}(\xi)= \frac{144 a_2^2 c_{0,1} e^{\frac{\sqrt{-a_2} \xi }{a_1}}}{\left(12 a_2 e^{\frac{\sqrt{-a_2} \xi }{a_1}}-a_1 c_{0,1}\right)^2}.
\end{equation} 
It can be checked that it satisfies equation (\ref{ex2eq3}), so it is exact solution. Now combination of  (\ref{ex2eq7}) and  (\ref{ex2eq2}) gives exact solution of  (\ref{ex2eq1}) in the form 
\begin{equation}\label{ex2p8}
u(x,y,t) =\frac{144 \ a_2^2\ c_{0,1} \ \exp \left(\frac{\sqrt{-a_2} }{a_1}\left(-\frac{1}{\sqrt{a_1}} \int \left\{a_1 \left( \alpha (t) + b\right)+c_2^2 \ \delta (t)-a_2\right\} \, dt+\sqrt{a_1} \ x+c_2 \ y-c_1\right)\right)}{\left[-a_1\ c_{0,1}+12\ a_2\  \exp \left(\frac{\sqrt{-a_2}}{a_1} \left(-\frac{1}{\sqrt{a_1}} \int \left\{a_1 \left( \alpha (t) + b\right)+c_2^2 \ \delta (t)-a_2\right\} \, dt+\sqrt{a_1} \ x+c_2 \ y-c_1\right)\right)\right]^2}.
\end{equation} 
Note that above solution is real and bounded for $a_1>0,\ a_2<0, \ c_{0,1}>0.$ In the figure \ref{fig1} profiles of solution (\ref{ex2p8}) are displayed for arbitrary function and parameter values (a) $\alpha (t) =-5 \cos (0.6 t+12),\ \delta (t)= 2 \text{sech}(0.1 t)-0.1,\ y =1,\ a_1 = 0.3,\ a_2 = -0.04,\ c_1 = 0.5,\ c_2 = 0.3,\ c_{0,1} = 4,\ b =0.8$ for fixing $y =1$ in the first column and (b) $\alpha (t) =-5 \cos (5 t+12),\ \delta (t)= -5 \text{sech}(0.4 t)+2.5\ t+5, \ a_1 = 0.8,\ a_2 = -0.5,\ c_1 = 0.5,\ c_2 = 0.3,\ c_{0,1} = 8,\ b =0.8$  for fixing $x =1$ in the second column. It is clear from the figure that solution have snake like and periodic boomerang  like profile due to the presence of periodic values of $\alpha (t) ,\ \delta (t)$ in the solution. 
\begin{figure}
\includegraphics[scale=1.25]{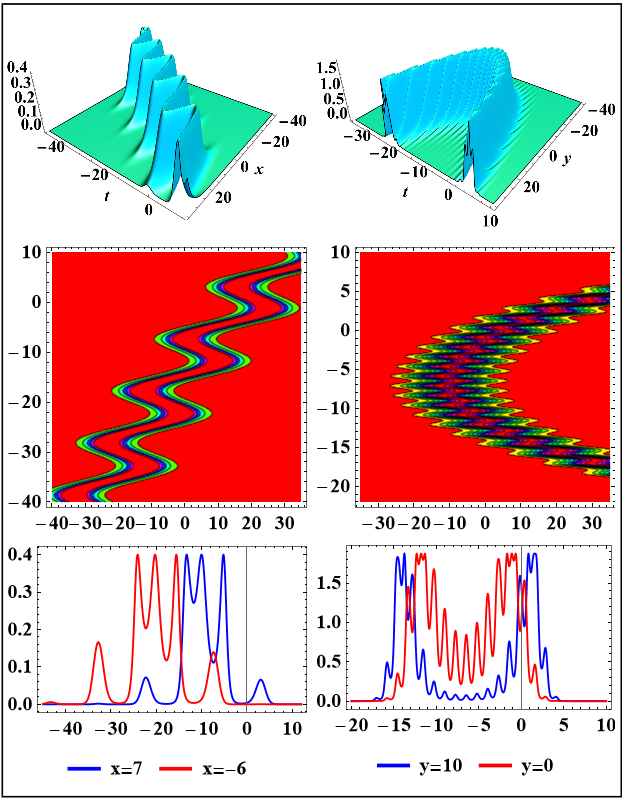}
\caption{3D, Contour and 2D Plots of solution (\ref{ex2p8}) for arbitrary functions and parameter values (a) $\alpha (t) =-5 \cos (0.6 t+12),\ \delta (t)= 2 \text{sech}(0.1 t)-0.1,\ y =1,\ a_1 = 0.3,\ a_2 = -0.04,\ c_1 = 0.5,\ c_2 = 0.3,\ c_{0,1} = 4,\ b =0.8$ in shown first column and for (b) $\alpha (t) =-5 \cos (5 t+12),\ \delta (t)= -5 \text{sech}(0.4 t)+2.5\ t+5,\ x =1, \ a_1 = 0.8,\ a_2 = -0.5,\ c_1 = 0.5,\ c_2 = 0.3,\ c_{0,1} = 8,\ b =0.8$ those are showed in the 2nd column. }\label{fig1}
\end{figure}
It is important to note that based on parameter restrictions, solution (\ref{ex2p8}) can be recast in several hyperbolic and trigonometric form \citep{das2020chirped}. In the following we have shown a few simple  hyperbolic and trigonometric forms of (\ref{ex2p8}). 
\begin{equation}\label{ex2p9}
u(x,y,t) =-\frac{3 a_2}{a_1} \text{sech}^2\left[\frac{\sqrt{-a_2} }{2 a_1}\left(-\frac{1}{\sqrt{a_1}} \int \left\{a_1 \left( \alpha (t) + b\right)+c_2^2 \ \delta (t)-a_2\right\} \, dt+\sqrt{a_1} \ x+c_2 \ y-c_1\right)+\frac{1}{2} \log \left(-\frac{12 a_2}{a_1 c_{0,1}}\right)\right],
\end{equation} 
\begin{equation}\label{ex2p10}
u(x,y,t) =\frac{3 a_2 }{a_1} \text{csch}^2\left[\frac{\sqrt{-a_2} }{2 a_1}\left(-\frac{1}{\sqrt{a_1}} \int \left\{a_1 \left( \alpha (t) + b\right)+c_2^2 \ \delta (t)-a_2\right\} \, dt+\sqrt{a_1} \ x+c_2 \ y-c_1\right)+\frac{1}{2} \log \left(\frac{12 a_2}{a_1 c_{0,1}}\right)\right].
\end{equation} 
The above two hyperbolic form are real and bounded for $a_1>0,\ a_2<0, \ c_{0,1}>0$. Let us consider the following trigonometric form
\begin{equation}\label{ex2p11}
u(x,y,t) =-\frac{3 a_2 }{a_1} \csc ^2\left[\frac{\sqrt{a_2} }{2 a_1} \left(-\frac{1}{\sqrt{a_1}} \int \left\{a_1 \left( \alpha (t) + b\right)+c_2^2 \ \delta (t)-a_2\right\} \, dt+\sqrt{a_1} \ x+c_2 \ y-c_1\right)-\frac{i}{2}  \log \left(\frac{12 a_2}{a_1 c_{0,1}}\right)\right]
\end{equation} 
The above  trigonometric forms is complex, but can be made real for appropriate choice of values of free parameters. For example solution (\ref{ex2p11}) is real for $c_{0,1}=\frac{12 a_2}{a_1 }$ or $c_{1}=-\frac{i a_1}{\sqrt{a_2} }  \log \left(\frac{12 a_2}{a_1 c_{0,1}}\right)$   with  $a_1>0,\ a_2>0$. Also it is important to note that for the above choice of  $c_{0,1}$ and  in case $a_2=0$ this solution reduces to  the algebraic form
\begin{equation}\label{ex2p11a}
u(x,y,t) =-\frac{12 a_1 }{ \left[-\frac{1}{\sqrt{a_1}} \int \left\{a_1 \left( \alpha (t) + b\right)+c_2^2 \ \delta (t)\right\} \, dt+\sqrt{a_1} \ x+c_2 \ y-c_1\right]^2}.
\end{equation} 
 Another simple trigonometric form of (\ref{ex2p8}) is given by
\begin{equation}\label{ex2p12}
u(x,y,t) =-\frac{3 a_2 }{a_1}\sec ^2\left[\frac{\sqrt{a_2} }{2 a_1} \left(-\frac{1}{\sqrt{a_1}} \int \left\{a_1 \left( \alpha (t) + b\right)+c_2^2 \ \delta (t)-a_2\right\} \, dt+\sqrt{a_1} \ x+c_2 \ y-c_1\right)-\frac{i}{2}  \log \left(-\frac{12 a_2}{a_1 c_{0,1}}\right)\right],
\end{equation} 
The above  trigonometric form is also complex, but can be made real for appropriate choice of values of free parameters. For example solution (\ref{ex2p12}) is real for $c_{0,1}=-\frac{12 a_2}{a_1 }$ or $c_{1}=-\frac{i a_1}{\sqrt{a_2} }  \log \left(-\frac{12 a_2}{a_1 c_{0,1}}\right)$   with  $a_1>0,\ a_2>0$.\\
\textbf{Case-2.\  For Boundary Condition $U(-\infty)= 0.$ }\\
Now applying the boundary condition $U(\xi)\rightarrow 0$ as $\xi \rightarrow - \infty $ for localized solution on recursive scheme (\ref{ex2eq4}), we get $c_{0,i}=0,\ i=1,3,4$ which gives $U_0(\xi)=c_{0,2}\  e^{\frac{\sqrt{-a_2} \xi }{a_1}}$ when $a_2<0$ and $a_1>0$. With this updated $U_0(\xi)$ iterative scheme (\ref{ex2eq4}) gives us the following correction terms
\begin {eqnarray}\label{ex2eq13}
&& U_0(\xi)=c_{0,2}\  e^{\frac{\sqrt{-a_2} \xi }{a_1}},\ U_1(\xi)=\frac{a_1\  c_{0,2}^2 e^{\frac{2 \sqrt{-a_2} \xi }{a_1}}}{6\ a_2},\ U_2(\xi)=\frac{a_1^2\ c_{0,2}^3
   e^{\frac{3 \sqrt{-a_2} \xi }{a_1}}}{48\ a_2^2},\\ 
   && U_3(\xi)=\frac{a_1^3 \ c_{0,2}^4 \ e^{\frac{4 \sqrt{-a_2} \xi }{a_1}}}{432\ a_2^3}, \ U_4(\xi)=\frac{5\ a_1^4 \ c_{0,2}^5
   e^{\frac{5 \ \sqrt{-a_2} \xi }{a_1}}}{20736\  a_2^4},\ U_5(\xi)=\frac{a_1^5\ c_{0,2}^6 e^{\frac{6 \sqrt{-a_2} \xi }{a_1}}}{41472\ a_2^5}, \cdots 
\end{eqnarray}
The general term of the above iterative terms is given by
\begin {eqnarray}\label{ex2eq14}
&& U_n(\xi)=\frac{(n+1)\ a_2  }{12^{n} \ a_1}\left(\frac{a_1 \ c_{0,2}}{a_2}  e^{\frac{\sqrt{-a_2} }{a_1}\xi }\right)^{n+1}, \ n\geq 0. 
\end{eqnarray}
Using symbolic calculation the sum  of the above iterative terms in closed form is  given by the generating function
\begin{equation}\label{ex2eq15}
U(\xi) = \sum_{n=0}^\infty  U_{n}(\xi)= \frac{144 a_2^2 c_{0,2} e^{-\frac{\sqrt{-a_2} \xi }{a_1}}}{\left(12 a_2 e^{-\frac{\sqrt{-a_2} \xi }{a_1}}-a_1 c_{0,2}\right)^2}.
\end{equation} 
It can be checked that it satisfies equation (\ref{ex2eq3}), so it is exact solution. Now combination of  (\ref{ex2eq15}) and  (\ref{ex2eq2}) gives exact solution of  (\ref{ex2eq1}) in the form 
\begin{equation}\label{ex2p16}
u(x,y,t) =\frac{144 \ a_2^2\ c_{0,2} \ \exp \left(-\frac{\sqrt{-a_2} }{a_1}\left(-\frac{1}{\sqrt{a_1}} \int \left\{a_1 \left( \alpha (t) + b\right)+c_2^2 \ \delta (t)-a_2\right\} \, dt+\sqrt{a_1} \ x+c_2 \ y-c_1\right)\right)}{\left[-a_1\ c_{0,2}+12\ a_2\  \exp \left(-\frac{\sqrt{-a_2}}{a_1} \left(-\frac{1}{\sqrt{a_1}} \int \left\{a_1 \left( \alpha (t) + b\right)+c_2^2 \ \delta (t)-a_2\right\} \, dt+\sqrt{a_1} \ x+c_2 \ y-c_1\right)\right)\right]^2}.
\end{equation} 
Note that above solution is real and bounded for $a_1>0,\ a_2<0, \ c_{0,2}>0.$  If we compare solution (\ref{ex2p16})   with (\ref{ex2p8}), it can be found that both are same except one sign before the independent variable and alter in the integration constant. So the solution (\ref{ex2p16})  have same features with  solution (\ref{ex2p8}) as shown in the figure \ref{fig1}. Following the previous case solution (\ref{ex2p16}) can also be expressed the forms of solutions (\ref{ex2p9}) to (\ref{ex2p12}), for simplicity we have skipped it here. It is important to note that the above derived solutions (\ref{ex2eq7}) and (\ref{ex2eq15}) of (\ref{ex2eq3}) satisfies the boundary conditions $U(\pm \infty)=0$. One can utilize these solutions to obtain solutions satisfying boundary condition  $\bar{U}(\pm \infty)=\Lambda $ using following theorem:
\begin{theorem}\label{th1}
If $U(\xi,a_1,a_2)$ is a solution of (\ref{ex2eq3}) then $\Lambda + U(\xi,a_1,\Lambda \ a_1 + a_2)$ is also a solution of  (\ref{ex2eq3}), where $\Lambda $ is an arbitrary constant and $a_1$ and $a_2$ are parameters involved in the equation.
\end{theorem}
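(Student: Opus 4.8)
The plan is to prove this by a direct substitution, exploiting the fact that the nonlinearity in (\ref{ex2eq3}) is a total second derivative. First I would rewrite the equation: since $\big(U(\xi)^2\big)'' = 2\big(U'(\xi)\big)^2 + 2\,U(\xi)\,U^{(2)}(\xi)$, the right-hand side obeys $\big(U'(\xi)\big)^2 + U(\xi)\,U^{(2)}(\xi) = \frac{1}{2}\big(U(\xi)^2\big)''$, so (\ref{ex2eq3}) is equivalent to
\begin{eqnarray*}
a_1^2\,U^{(4)}(\xi) + a_2\,U^{(2)}(\xi) + \frac{a_1}{2}\big(U(\xi)^2\big)'' = 0.
\end{eqnarray*}
This reformulation is the crux of the argument: a constant shift of $U$ affects only the quadratic term, and it does so by a contribution proportional to $U^{(2)}$, which can then be reabsorbed into the linear coefficient $a_2$.

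Next I would set $V(\xi) := \Lambda + U(\xi,a_1,\Lambda a_1 + a_2)$ and substitute it into the reformulated equation with the \emph{original} parameters $a_1,a_2$. Because $\Lambda$ is constant, $V^{(4)} = U^{(4)}$ and $V^{(2)} = U^{(2)}$, while $V^2 = U^2 + 2\Lambda U + \Lambda^2$ gives $\big(V^2\big)'' = \big(U^2\big)'' + 2\Lambda\,U^{(2)}$. Collecting terms,
\begin{eqnarray*}
a_1^2\,V^{(4)} + a_2\,V^{(2)} + \frac{a_1}{2}\big(V^2\big)'' = a_1^2\,U^{(4)} + \big(a_2 + \Lambda a_1\big)\,U^{(2)} + \frac{a_1}{2}\big(U^2\big)'',
\end{eqnarray*}
and the right-hand side is exactly the reformulated operator evaluated on $U$ with second parameter $\Lambda a_1 + a_2$. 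By hypothesis $U(\xi,a_1,\Lambda a_1 + a_2)$ solves (\ref{ex2eq3}) for that parameter pair, so this expression vanishes, which shows $V$ solves (\ref{ex2eq3}) for $(a_1,a_2)$, as claimed.

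I do not expect a genuine obstacle; the only point requiring care is the bookkeeping of which parameter is shifted — the translation $U \mapsto U + \Lambda$ of the solution must be compensated by the shift $a_2 \mapsto a_2 + \Lambda a_1$ of the coefficient of $U^{(2)}$, because the extra contribution of the quadratic nonlinearity under the translation is precisely $\Lambda a_1\,U^{(2)}$. It may be worth closing with a remark that this is the travelling-wave reduction of a Galilean-type invariance of the PDE (\ref{ex2eq1}), and that combining the theorem with the closed forms (\ref{ex2eq7}) and (\ref{ex2eq15}) yields the corresponding solutions satisfying $\bar U(\pm\infty) = \Lambda$.
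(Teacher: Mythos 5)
Your proof is correct and is essentially the paper's own argument: both are direct substitutions of the constant-shifted function, with the extra $a_1\Lambda\,U^{(2)}$ term produced by the shift absorbed into the coefficient $a_2$, exactly as in equations (\ref{ex2eq17})--(\ref{ex2eq18}). Rewriting the nonlinearity as $\tfrac{a_1}{2}\bigl(U^2\bigr)''$ is only a cosmetic streamlining of the same computation, not a different route.
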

\begin{proof}
Since $U(\xi)\simeq U(\xi,a_1,a_2)$ is a solution of (\ref{ex2eq3}), it satisfies 
\begin{eqnarray}\label{ex2eq17}
a_1^2 \ U^{(4)}(\xi,a_1,a_2)+ a_2 \ U^{(2)}(\xi,a_1,a_2) + a_1 \left\{ \left(U'(\xi,a_1,a_2))\right)^2+   U(\xi,a_1,a_2)\ U^{(2)}(\xi,a_1,a_2)\right\}=0.
\end{eqnarray}
Next we consider another solution of (\ref{ex2eq3}) in the form 
\begin{eqnarray}\label{ex2eq17a}
\bar{U}(\xi) = \Lambda +V(\xi).
\end{eqnarray}
 Using this solution in (\ref{ex2eq3}), to get
\begin{eqnarray}\label{ex2eq18}
a_1^2 \ V^{(4)}(\xi)+ (a_1 \Lambda + a_2 )\ V^{(2)}(\xi,a_1,a_2) + a_1 \left\{ \left(V'(\xi))\right)^2+   V(\xi)\ V^{(2)}(\xi)\right\}=0.
\end{eqnarray}
If we compare (\ref{ex2eq18}) with (\ref{ex2eq17}), it is clear that  $V(\xi)\simeq U(\xi,a_1,a_1 \Lambda + a_2 )$ is a solution of (\ref{ex2eq18}). That is $\bar{U}(\xi) = \Lambda +U(\xi,a_1,a_1 \Lambda + a_2 )$ is another solution of (\ref{ex2eq3}). This concludes the proof. 
\end{proof}
Use of theorem \ref{th1} and (\ref{ex2eq7}) gives us the following solution of (\ref{ex2eq3}) in the form:
\begin{equation}\label{ex2eq19}
\bar{U}(\xi) = \Lambda + \frac{144 \left(a_1 \Lambda + a_2 \right)^2 c_{0,1} e^{\frac{\sqrt{-\left(a_1 \Lambda + a_2 \right)} \xi }{a_1}}}{\left(12 \left(a_1 \Lambda + a_2 \right) e^{\frac{\sqrt{-\left(a_1 \Lambda + a_2 \right)} \xi }{a_1}}-a_1 c_{0,1}\right)^2}.
\end{equation} 
 Now combination of  (\ref{ex2eq19}) and  (\ref{ex2eq2}) gives exact solution of  (\ref{ex2eq1}) in the form 
\begin{equation}\label{ex2eq20}
\bar{u}(x,y,t) =\Lambda + \frac{144 \ \left(a_1 \Lambda + a_2 \right)^2\ c_{0,1} \ \exp \left(\frac{\sqrt{-\left(a_1 \Lambda + a_2 \right)} }{a_1}\left(-\frac{1}{\sqrt{a_1}} \int \left\{a_1 \left( \alpha (t) + b\right)+c_2^2 \ \delta (t)-a_2\right\} \, dt+\sqrt{a_1} \ x+c_2 \ y-c_1\right)\right)}{\left[-a_1\ c_{0,1}+12\ \left(a_1 \Lambda + a_2 \right)\  \exp \left(\frac{\sqrt{-\left(a_1 \Lambda + a_2 \right)}}{a_1} \left(-\frac{1}{\sqrt{a_1}} \int \left\{a_1 \left( \alpha (t) + b\right)+c_2^2 \ \delta (t)-a_2\right\} \, dt+\sqrt{a_1} \ x+c_2 \ y-c_1\right)\right)\right]^2}.
\end{equation} 
Note that above solution is real and bounded for $a_1>0,\ \left(a_1 \Lambda + a_2 \right)<0, \ c_{0,1}>0.$ In the figure \ref{fig2}, profiles of solution (\ref{ex2eq20}) are displayed for arbitrary function and parameter values (a) $\alpha (t) =-25\ \cos (0.5 t^2),\ \delta (t)= 32\ \text{sech}(-0.5\  t),\ a_1 = 0.203,\ a_2 = -0.1,\ \Lambda= -0.5,\  c_1 = 0.5,\ c_2 = 0.1,\ c_{0,1} = 1,\ b =0.8$ for fixing $y =1$ in the first column and (b) $\alpha (t) =-8 \cos (1.5\ t+12),\ \delta (t)= -10 \text{sech}(1.4 t)+2.5\ t+1, \ a_1 = 0.34,\ a_2 = -0.12,\ \Lambda= -0.5,\ c_1 = 0.5,\ c_2 = 0.06,\ c_{0,1} = 1,\ b =0.8$  for fixing $x =1$ in the second column. It is clear from the figure that solution have snake like and periodic boomerang  like profile due to the presence of periodic values of $\alpha (t) ,\ \delta (t)$ in the solution. 
\begin{figure}
\includegraphics[scale=1.25]{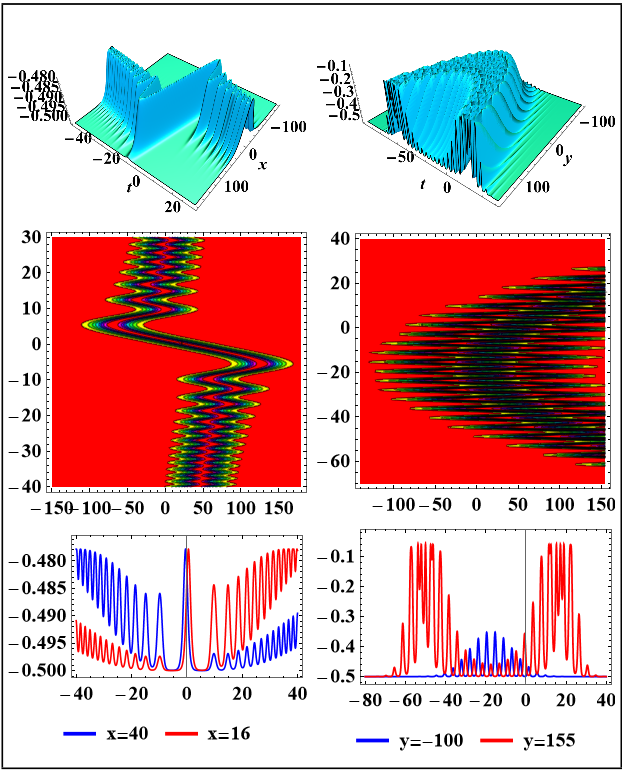}
\caption{3D, Contour and 2D Plots of solution (\ref{ex2eq20}) for arbitrary functions and parameter values (a) $\alpha (t) =-25\ \cos (0.5 t^2),\ \delta (t)= 32\ \text{sech}(-0.5\  t),\ y =1,\ a_1 = 0.203,\ a_2 = -0.1,\ \Lambda= -0.5,\  c_1 = 0.5,\ c_2 = 0.1,\ c_{0,1} = 1,\ b =0.8$ in shown first column and for (b) $\alpha (t) =-8 \cos (1.5\ t+12),\ \delta (t)= -10 \text{sech}(1.4 t)+2.5\ t+1,\ x =1, \ a_1 = 0.34,\ a_2 = -0.12,\ \Lambda= -0.5,\ c_1 = 0.5,\ c_2 = 0.06,\ c_{0,1} = 1,\ b =0.8$ those are showed in the 2nd column. }\label{fig2}
\end{figure}
 We can also combine (\ref{ex2eq15}) with theorem \ref{th1} to get another exact solution of  (\ref{ex2eq1}) like (\ref{ex2eq20}), which only differ in sign of variable $\xi$ and integration constant.  It is important to note that based on parameter restrictions of these derived solutions can be recast in several hyperbolic and trigonometric presented above and in the several forms presented in \citep{das2020chirped}.  For simplicity we have skipped those calculations here.
 
\textbf{Case-3.\  For initial Condition $V(0)= v_0,\ V'(0)= 0,\ V''(0)= v_1,\ V'''(0)= 0.$ }\\
To address (\ref{ex2eq18}) given the specified initial conditions, we examine the linear and nonlinear components in the structure

$\hat{{\cal L}}[V](\xi)= V^{(4)}(\xi), \ \  {\cal N}[V](\xi)=-\frac{a_{2}}{a_{1}^{2}} \left(V^{(2)} \! \left(\xi \right)\right)-\frac{1}{a_{1}} \left\{ \left(V' \! \left(\xi \right)\right)^{2}+\left(\Lambda +V \! \left(\xi \right)\right) \left(V^{(2)} \! \left(\xi \right)\right) \right\}.$

Subsequent to the section \ref{red}, we can simplify the equation (\ref{ex2eq18}) to the linear system of equations
\begin{equation}\label{case3a1}
  \hat{\cal L}\left[V_{0}(\xi) \right]=0, \ \ 
  \hat{\cal L}\left[V_{k}(\xi) \right] = {\cal A}_{k-1}(x),\ \ k \geq 1.
\end{equation}
Here the fundamental set of solutions of the linear part $\hat{\cal L}\left[V(\xi) \right] \sim   V^{(4)}(\xi )= 0$ is given by \\
 $\left\{ \tilde{V}_1(\xi)= \xi^3,\ \tilde{V}_2(\xi)= \xi^2, \ \tilde{V}_3(\xi)=\xi ,\ \tilde{V}_4(\xi)=1 \right\}$.  Thus, the solution for the linear part of the first equation in (\ref{case3a1}) can be expressed as $V_0(\xi)= c_3\ \xi^3+c_2\ \xi^2+c_1 \ \xi+c_0.$ Applying the initial conditions $V_0(0)= v_0,\ V_0'(0)= 0,\ V_0''(0)= v_1,\ V_0'''(0)= 0$ yields $c_3= 0,\ c_2= v_1,\ c_1= 0,\ c_0= v_0.$ Therefore, the system of equations in (\ref{case3a1}) can be addressed through MMVP utilizing the iterative scheme outlined in (\ref{eq2p13}) in the form
 \begin {eqnarray}\label{case3a2}
&& V_0(\xi) = v_1\ \xi^2+v_0, \nonumber \\
&& V_k(\xi) = \sum_{j=1}^{n}(-1)^{n-j}    \tilde{V}_{j}(\xi) \int \frac{{\cal A}_{k-1}(\xi) \ W_j(\xi)}{P_0(\xi)\ W(\xi)} d \xi,   \ \ k \geq 1.
\end{eqnarray}
The iterative method outlined above provides us with these correction terms
\begin {eqnarray}\label{case3a3}
&& V_0(\xi)=v_1\ \xi^2+v_0,\ V_1(\xi)=-\left(\left(v_{1} \xi^{2}+5 \Lambda +5 v_{0}\right) a_{1}+5 a_{2}\right) v_{1} \xi^{4}/(60 a_{1}^{2}) \nonumber 
,\\ 
&& V_2(\xi)=\xi ^6 v_1 \left(a_1^2 \left(210 \Lambda ^2+135 \Lambda  \xi ^2 v_1+15 v_0 \left(28 \Lambda +9 \xi ^2 v_1\right)+14 \xi ^4 v_1^2+210
   v_0^2\right)\right. \nonumber  \\
   && \ \ \  \ \ \ \ \ \ \left.+15 a_2 a_1 \left(28 \Lambda +9 \xi ^2 v_1+28 v_0\right)+210 a_2^2\right) /(75600 a_1^4) \nonumber 
,\\ 
   && V_3(\xi)=-\xi ^8 v_1 \left(13 a_2 a_1^2 \left(1485 \Lambda ^2+1782 \Lambda  \xi ^2 v_1+594 v_0 \left(5 \Lambda +3 \xi ^2 v_1\right)+254 \xi ^4
   v_1^2+1485 v_0^2\right)\right. \nonumber  \\
   && \ \ \  \ \ \ \ \ \ \left.+a_1^3 \left(6435 \Lambda ^3+11583 \Lambda ^2 \xi ^2 v_1+13 v_0 \left(1485 \Lambda ^2+1782 \Lambda  \xi ^2 v_1+254 \xi
   ^4 v_1^2\right)+3302 \Lambda  \xi ^4 v_1^2\right.\right. \nonumber \\
   && \ \ \  \ \ \ \ \ \ \left. \left.+3861 v_0^2 \left(5 \Lambda +3 \xi ^2 v_1\right)+231 \xi ^6 v_1^3+6435 v_0^3\right)+3861 a_2^2 a_1
   \left(5 \Lambda +3 \xi ^2 v_1+5 v_0\right)+6435 a_2^3\right)\nonumber \\ 
   &&  \ \ \  \ \ \ \ \ \  /(129729600 a_1^6), \nonumber \\ 
&& V_4(\xi)=\xi ^{10} v_1 \left(3060 a_2^2 a_1^2 \left(3003 \Lambda ^2+7644 \Lambda  \xi ^2 v_1+546 v_0 \left(11 \Lambda +14 \xi ^2 v_1\right)+1546
   \xi ^4 v_1^2+3003 v_0^2\right)\right. \nonumber \\
   && \ \ \  \ \ \ \ \ \ \left.+153 a_2 a_1^3 \left(40040 \Lambda ^3+152880 \Lambda ^2 \xi ^2 v_1+40 v_0 \left(3003 \Lambda ^2+7644 \Lambda 
   \xi ^2 v_1+1546 \xi ^4 v_1^2\right)+61840 \Lambda  \xi ^4 v_1^2\right.\right. \nonumber \\
   && \ \ \  \ \ \ \ \ \ \left. \left.+10920 v_0^2 \left(11 \Lambda +14 \xi ^2 v_1\right)+5481 \xi ^6 v_1^3+40040
   v_0^3\right)+a_1^4 \left(1531530 \Lambda ^4+7796880 \Lambda ^3 \xi ^2 v_1\right.\right.\nonumber  \\
   && \ \ \  \ \ \ \ \ \ \left. \left.+4730760 \Lambda ^2 \xi ^4 v_1^2+3060 v_0^2 \left(3003 \Lambda
   ^2+7644 \Lambda  \xi ^2 v_1+1546 \xi ^4 v_1^2\right)+153 v_0 \left(40040 \Lambda ^3 
   \right.\right.\right. \nonumber \\
   && \ \ \  \ \ \ \ \ \ \left. \left. \left.
   +152880 \Lambda ^2 \xi ^2 v_1+61840 \Lambda  \xi ^4
   v_1^2+5481 \xi ^6 v_1^3\right)+838593 \Lambda  \xi ^6 v_1^3+556920 v_0^3 \left(11 \Lambda +14 \xi ^2 v_1\right)\right.\right. \nonumber \\
   && \ \ \  \ \ \ \ \ \ \left. \left. +44198 \xi ^8 v_1^4+1531530
   v_0^4\right)+556920 a_2^3 a_1 \left(11 \Lambda +14 \xi ^2 v_1+11 v_0\right)+1531530 a_2^4\right) \nonumber  \\
   && \ \ \  \ \ \ \ \ \ \ /(2778808032000 a_1^8),\nonumber  \\
   && \ \ \  \ \ \ \ \ \ \  \cdots 
\end{eqnarray}
Through symbolic computation and manipulation, the closed form of the sum for the above  iterative terms is given by the generating function
\begin{eqnarray}\label{case3a4}
V(\xi) =v_0 +\frac{-3 a_1 \left(\Lambda +v_0\right)+\sqrt{9 \left(a_1 \left(\Lambda +v_0\right)+a_2\right)^2+48 a_1^3 v_1}-3 a_2}{2 a_1} \text{sn}^2\left[\chi \ \xi, \kappa  \right],
\end{eqnarray} 
where
\begin{eqnarray}\label{case3a5}
&& \chi = \sqrt{\frac{3 a_1 \Lambda +\sqrt{9 \left(a_1 \left(\Lambda +v_0\right)+a_2\right)^2+48 a_1^3 v_1}+3 a_1 v_0+3 a_2}{24 a_1^2}},\nonumber \\
\text{and} \ \ \ \ \ \ \ \ \ \ \ \ \ \ \ \ && \nonumber \\
&& \kappa = \sqrt{ \frac{\left(a_1 \left(\Lambda +v_0\right)+a_2\right) \sqrt{9 \left(a_1 \left(\Lambda +v_0\right)+a_2\right)^2+48 a_1^3 v_1}-3 \left(a_1
   \left(\Lambda +v_0\right)+a_2\right)^2}{8 a_1^3 v_1}-1}.
\end{eqnarray} 
Under the transformation
$v_{0} = 
-\frac{-4 k^{2} \vartheta^{2} a_{1}^{2}-4 \vartheta^{2} a_{1}^{2}+a_{1} \Lambda +a_{2}}{a_{1}}
, v_{1} = -12 \vartheta^{4} k^{2} a_{1}$ and transformation property $\mathrm{sn} (z; 1/k) = k\ \mathrm{sn} (z/k; k)$ of the Jacobi $\mathrm{sn}$ function \citep{frank2010nist} above solution reduces to  a simple form 
\begin{eqnarray}\label{case3a6}
V(\xi)=\frac{4 \vartheta^{2} \left(k^{2}+1\right) a_{1}^{2}-a_{1} \Lambda -a_{2}}{a_{1}}  -12  k^{2} \vartheta^{2} a_{1} \ \mathrm{sn}^{2}\! \left[\vartheta \ \xi, \ k \right].
\end{eqnarray} 
Utilizing the property of the Jacobi $\mathrm{sn}$ function $\mathrm{sn}^2 (z; k) + \mathrm{cn}^2 (z; k) = k^2 \mathrm{sn}^2 (z; k) + \mathrm{dn}^2 (z; k) = 1$, the above solution recast to following two elliptic forms
\begin{eqnarray}\label{case3a7}
V(\xi)=\frac{\left(-8 k^{2}+4\right) \vartheta^{2} a_{1}^{2}-a_{1} \Lambda -a_{2}}{a_{1}}+
 12  k^{2} \vartheta^{2} a_{1} \ \mathrm{cn}^{2}\! \left[\vartheta \  \xi, \ k \right],
\end{eqnarray}
and 
\begin{eqnarray}\label{case3a8}
V(\xi)=\frac{4 \vartheta^{2} \left(k^{2}-2\right) a_{1}^{2}-a_{1} \Lambda -a_{2}}{a_{1}}
+12  \vartheta^{2} a_{1} \ \mathrm{dn}^{2}\! \left[\vartheta \ \xi, \ k \right].
\end{eqnarray}
Currently, the combination of any one from (\ref{case3a4}), (\ref{case3a6})-(\ref{case3a8}) with (\ref{ex2eq17a}) and (\ref{ex2eq2}) results in an exact solution for (\ref{ex2eq1}). Here, for ease, we combine (\ref{case3a6}) with (\ref{ex2eq17a}) and (\ref{ex2eq2}), resulting in the exact solution of (\ref{ex2eq1}) expressed as
\begin{eqnarray}\label{case3a9}
u(x,y,t) =&&\frac{4 \vartheta^{2} \left(k^{2}+1\right) a_{1}^{2}-a_{1} \Lambda -a_{2}}{a_{1}}  \nonumber \\
&&-12  k^{2} \vartheta^{2} a_{1} \ \mathrm{sn}^{2}\! \left[\vartheta \ \left(-\frac{1}{\sqrt{a_1}} \int \left\{a_1 \left( \alpha (t) + b\right)+c_2^2 \ \delta (t)-a_2\right\} \, dt+\sqrt{a_1} \ x+c_2 \ y-c_1\right), \ k \right].
\end{eqnarray} 
It is important to note that the solution obtained above is valid for $a_1 > 0$.  In the figure \ref{fig3} profiles of solution (\ref{case3a9}) are displayed for arbitrary function and parameter values (a) $\alpha (t) =-5 \cos (0.6 t+12),\ \delta (t)= 2 \text{sech}(0.1 t)-0.1,\ y =1,\ \Lambda = 0.6,\vartheta = 0.25,k= 0.8,\ a_1 = 0.3,\ a_2 = -0.04,\ c_1 = 0.5,\ c_2 = 0.3,\  b =0.8$ for fixed $y =1$ in the first column and (b) $\alpha (t) =-5 \cos (5 t+12),\ \delta (t)= -5 \text{sech}(0.4 t)+7\ t+5, \ \Lambda = 0.6,\vartheta = 0.3,\ k= 0.8,\ a_1 = 0.8,\ a_2 = -0.5,\ c_1 = 0.5,\ c_2 = 0.3\ b =0.8$  for fixed $x =1$ in the second column. The figure clearly shows that the solution exhibits a snake-like and periodic boomerang-like profile, attributed to the periodic values of $\alpha (t)$ and $\delta (t)$ present in the solution.
\begin{figure}
\includegraphics[scale=1.25]{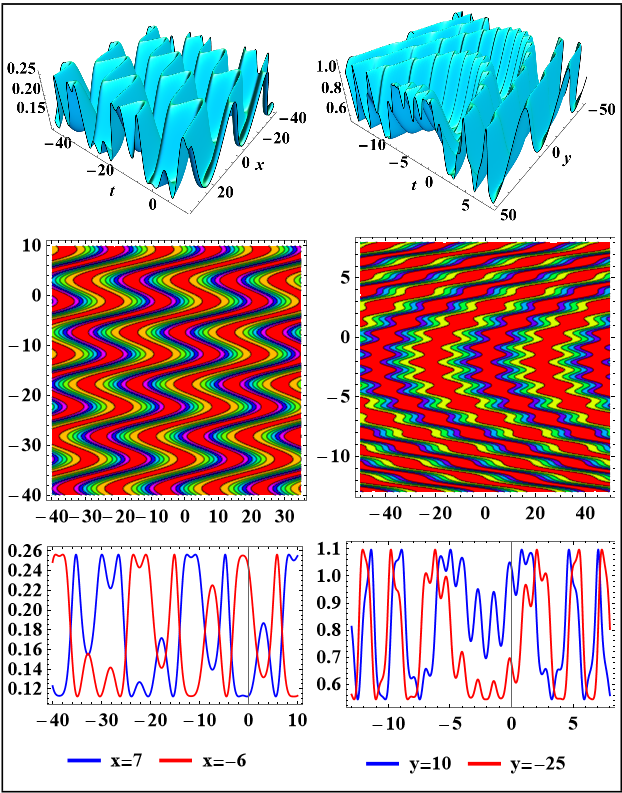}
\caption{3D, Contour and 2D Plots of solution (\ref{case3a9}) for arbitrary functions and parameter values (a) $\alpha (t) =-5 \cos (0.6 t+12),\ \delta (t)= 2 \text{sech}(0.1 t)-0.1,\ y =1,\ \Lambda = 0.6,\vartheta = 0.25,k= 0.8,\ a_1 = 0.3,\ a_2 = -0.04,\ c_1 = 0.5,\ c_2 = 0.3,\  b =0.8$ in shown first column and for (b) $\alpha (t) =-5 \cos (5 t+12),\ \delta (t)= -5 \text{sech}(0.4 t)+7\ t+5, \ x=1, \ \Lambda = 0.6,\vartheta = 0.3,k= 0.8,\ a_1 = 0.8,\ a_2 = -0.5,\ c_1 = 0.5,\ c_2 = 0.3\ b =0.8$  those are showed in the 2nd column. }\label{fig3}
\end{figure}
\subsection{Advantages and Limitations of MMVP}\label{AdLim}
The MMVP offers the following benefits as a means of delivering a exact solution:
1. Nonlinear ordinary differential equations (NLODEs) of any order with several linear and nonlinear terms can be solved using it.
2. For nonlinear differential equations that are taken into consideration, it most of cases gives the closed form solution of the acquired series solution.
3. It can provide closed form solutions of NLODEs in terms of hyperbolic, trigonometric, algebraic, Jacobi elliptic functions.
4. Depending on the order of the differential equations, the linear terms that are adjusted, and the initial and boundary conditions that are applied, the method can yield numerous specific exact solutions.\\
In addition to its benefits, this approach has the following drawbacks:
1. When the complementary functions of the linear component are unknown, the approach is unable to solve NLODEs. 
2. For considered NLODEs, the closed form solution may not always be found from the provided series solution.
\section{Conclusion}\label{con}
We have presented a novel approach to derive the exact solution of nonlinear ordinary differential equations in this work.  Here, we develop exact solutions for higher-order nonlinear boundary and initial value problems for the first time by combining a linearization methodology for nonlinear equations with method of variation of parameters and Adomian polynomials. When it comes to finding exact solutions for nonlinear equations, the proposed approach is very successful. We used the novel approach to the extended to (2 + 1) dimensions thermophoretic motion equation, which was obtained from the motions of wrinkle waves in substrate-supported graphene sheets. It is demonstrated that this approach may effectively produce closed form solutions in terms of  trigonometric, exponential, hypergeometric, algebraic, and Jacobi elliptic functions, respectively. Three figures are used to present the profiles of three solutions, each of which has some intriguing characteristics.  While previous MVP applications in the literature deal with approximate solutions, in this work we introduced modified  MVP using the Adomian polynomial and applied it to achieve exact closed form solutions to NLODEs for the first time.  Without the need for perturbation, discretizations, or the identification of Lagrange multipliers, the suggested iterative method finds the exact closed form solutions. The suggested approach delivers the solution through a quickly converging series that easily converges to a closed form, enhancing the method's efficiency in obtaining exact closed-form solutions. These findings are distinctive and not found in existing literature. 
\section*{Acknowledgement}
The authors thank the article's referees for their important ideas to improve the article's content. 
\section*{References}
\bibliographystyle{plain}
\bibliography{cNLSEref}
\end{document}